\theoremstyle{definition}
\newtheorem{theorem}{Theorem}[section]
\newtheorem{definition}[theorem]{Definition}
\newtheorem{proposition}[theorem]{Proposition}
\newtheorem{example}[theorem]{Example}
\newtheorem{lemma}[theorem]{Lemma}
\newtheorem{remark}[theorem]{Remark}
\newtheorem{question}[theorem]{Question}
\newtheorem{Main}[theorem]{Main Theorem}
\newcommand{\ra}{\rightarrow}
\newcommand{\mult}{\mathsf{mult}}
\begin{document}

\title{Uniform Cyclic Group Factorizations of Finite Groups}

\author{Kazuki Kanai}
\address{General Education Program, National Institute of Technology, Kure College, Hiroshima, 737-8506, Japan.}
\email{kanai@m.sc.niigata-u.ac.jp}
\author{Kengo Miyamoto}
\address{Department of Computer and Information Science, Ibaraki University, Ibaraki, 316-8511, Japan.}
\email{kengo.miyamoto.uz63@vc.ibaraki.ac.jp}
\author{Koji Nuida}
\address{Institute of Mathematics for Industry (IMI), Kyushu University, Fukuoka, 819-0395, Japan; National Institute of Advanced Industrial Science and Technology (AIST), Tokyo, 135-0064, Japan.}
\email{nuida@imi.kyushu-u.ac.jp}
\author{Kazumasa Shinagawa}
\address{Department of Computer and Information Science, Ibaraki University, Ibaraki, 316-8511, Japan; National Institute of Advanced Industrial Science and Technology (AIST), Tokyo, 135-0064, Japan.}
\email{kazumasa.shinagawa.np92@vc.ibaraki.ac.jp}

\keywords{Finite groups, factorization, simple groups, logarithmic signatures}
\thanks{K. Miyamoto is supported by Japan Society for the Promotion of Science KAKENHI 20K14302 and 23H00479.}
\thanks{K. Shinagawa, who is the corresponding author of this paper, is supported by Japan Society for the Promotion of Science KAKENHI 21K17702 and 23H00479, and JST CREST Grant Number MJCR22M1.}
\subjclass[2020]{20D06; 20D08; 20D10; 20D20; 20D40; 94A60}

\maketitle

\begin{abstract}
In this paper, we introduce a kind of decomposition of a finite group called a uniform group factorization, as a generalization of exact factorizations of a finite group.
A group $G$ is said to admit a uniform group factorization if there exist subgroups $H_1, H_2, \ldots, H_k$ such that $G = H_1 H_2 \cdots H_k$ and the number of ways to represent any element $g \in G$ as $g = h_1 h_2 \cdots h_k$ ($h_i \in H_i$) does not depend on the choice of $g$. 
Moreover, a uniform group factorization consisting of cyclic subgroups is called a uniform cyclic group factorization.
First,  we show that any finite solvable group admits a uniform cyclic group factorization.
Second, we show that whether all finite groups admit uniform cyclic group factorizations or not is equivalent to whether all finite simple groups admit uniform group factorizations or not.
Lastly, we give some concrete examples of such factorizations.
\end{abstract}

\maketitle

\section{Introduction}\label{s:introduction}

Throughout this paper, all groups are assumed to be finite and non-trivial, and the basic notations follow \cite{DM96} and Atlas \cite{ATLAS}.
For a group $G$, we denote by $P_p$ a $p$-Sylow subgroup of $G$.

A group $G$ is said to admit a factorization (resp. a group factorization) of length $k$ by an ordered tuple of subsets (resp. subgroups) $\mathcal{H}=(H_1,H_2, \ldots, H_k)$ if 
\[ G=H_1H_2\cdots H_k=\{ h_1h_2\cdots h_k\mid h_1\in H_1, h_2 \in H_2, \ldots, h_k \in H_k \}. \]
Moreover, $\mathcal{H}$ is called exact if $|G|=|H_1||H_2| \cdots |H_k|$, and is called maximal if $H_1,H_2,\ldots, H_k$ are maximal subgroups. 
Such factorizations of a group were investigated in many previous works in the literature.
A pioneering work by Miller \cite{Mil13} reveals that, 
in contrast to the fact that any group factorization $G = H_1 H_2$ of length two implies that $G = H_2 H_1$ as well (hence the ordering of the two factors does not matter), the ordering of factors is in fact essential for factorizations of length three or longer.
In particular, he showed that $A_5=P_2P_3P_5$ but $A_5\neq P_2P_5P_3$ for some Sylow subgroups $P_2,P_3$, and $P_5$.

In group theory, one of the main directions is to study simple groups.
Following the vigorous works on group factorizations of length two by Zappa \cite{Zap42} and Sz\'{e}p \cite{Sz50, Sz51}, It\^{o} \cite{Ito53} studied group factorizations of the projective special linear group $\mathrm{PSL}(2,q)$ for each prime power $q$.
In particular, he showed that there is an exact maximal group factorization by two non-normal 
subgroups of $\mathrm{PSL}(2,q)$ if $q\equiv 3 \pmod 4$ and $q>7$.
This fact together with the main theorem by \cite{Sz51} gives a short proof of the simplicity of $\mathrm{PSL}(2,q)$.
In this context, some authors worked on exact group factorizations of simple groups; for example, \cite{Gen86a}, \cite{Gen86b}, \cite{HLS87}, and so on.
Subsequently, Liebeck, Praeger, and Saxl \cite{LPS90} determined whether each sporadic simple group $G$ and its automorphism group $\mathsf{Aut}(G)$ have maximal group factorizations of length two.
After that, Giudici \cite{Giu06} determined all group factorizations of length two for sporadic simple groups.
In recent years, there have been many studies
(e.g., \cite{LX19}, \cite{LX20}, \cite{BL21}) on exact group factorizations of length two of almost simple groups (i.e., groups $G$ such that $S\leq G \leq \mathsf{Aut}(S)$ for some simple group $S$).
In 2021, Rahimipour \cite{Rah21} constructed exact group factorizations of length three or four for some sporadic simple groups.

Independent of the works described above, Magliveras \cite{Magliveras86} studied exact factorizations, where he called them logarithmic signatures, to construct a symmetric-key encryption scheme known as PGM cryptosystem. (See also \cite{MagliverasST02}, \cite{BSGM02}, \cite{Lempken09}.)
For a logarithmic signature $\mathcal{H}$, the size of $\mathcal{H}$ is defined by the sum of the cardinality of each component of $\mathcal{H}$.
Since the size of a logarithmic signature corresponds to the size of the key
in the cryptosystem, it is desirable to find a logarithmic signature of as small size as possible. 
In 2002, Gonz{\'a}lez Vasco and Steinwandt \cite{GonzalezVasco02} gave a lower bound on the size of logarithmic signatures of a finite group.
A logarithmic signature matching the lower bound is called a minimal logarithmic signature.
It has been conjectured that any finite group has a minimal logarithmic signature. 
In this context, there are various previous works on constructing minimal logarithmic signatures for finite groups; for example, \cite{GonzalezVasco03},\cite{Lempken05},\cite{Holmes04},\cite{Rahimipour18}.

The present paper introduces the notion of uniform factorizations of a finite group as an analogy of logarithmic signatures (or exact factorizations). 
Let $G$ be a finite group and $\mathcal{H} = (H_1, H_2, \ldots, H_k)$ a tuple of subsets of $G$.
If, for any $g\in G$, the number of tuples $(h_1, h_2, \ldots, h_k) \in H_1 \times H_2 \times \cdots \times H_k$ with $g=h_1h_2\cdots h_k$ does not depend on the choice of $g$, then $\mathcal{H}$ is called a uniform factorization of $G$
and we write $G \equiv H_1H_2\cdots H_k$.
Obviously, the tuple $(G)$ of length one is a uniform factorization.
If $H_1, H_2,\ldots, H_k$ are proper subsets, the factorization is said to be proper. 
If $H_1,H_2,\ldots, H_k$ are subgroups of $G$, then $\mathcal{H}$ is called a uniform group factorization of $G$. 
Moreover, if they are cyclic subgroups of $G$, then $\mathcal{H}$ is called a uniform cyclic group factorization of $G$.

Uniform cyclic group factorizations for various finite groups are expected to have applications in computer algebra. 
For example, they can be applied to efficient generation of uniformly random elements of a finite group $G$. 
A straightforward method involves assigning an integer from $1$ to $|G|$ to each element and selecting the $x$-th element for a uniformly random number $x \in \{1, 2, \ldots, |G|\}$. 
Although this method is feasible when the elements of $G$ are efficiently enumerable, in general, this method requires storing all elements in a table, which requires a huge storage space when $|G|$ is large.
In contrast, if a uniform cyclic group factorization $G \equiv H_1H_2\cdots H_k$ exists, such a random element $g \in G$ can be generated by $g= h_1^{x_1}h_2^{x_2} \cdots h_k^{x_k}$ where $h_i$ is a generator of $H_i$ and $x_i \in \{0, 1, \ldots, |H_i|-1\}$ is chosen uniformly at random.
This method only requires storing $k$ elements $h_1,h_2,\dots,h_k$, significantly reducing the storage space. 
Here we emphasize that, in order to make the element $g$ uniformly random,  it is \emph{not} necessary for a decomposition of $g$ into elements of $H_i$'s being \emph{unique}. The requirement is that there are a constant number of such decompositions independent of $g$, justifying our relaxed condition for uniform factorizations compared to logarithmic signatures.
We note that there is a line of studies (e.g., \cite{D08}) on the problem of generating group elements with a distribution close to uniform, while the above method based on uniform cyclic group factorizations generates perfectly uniform distribution.

Now it is natural to ask the following question:
\begin{question}\label{question}
Does any finite group have a uniform cyclic group factorization? 
\end{question}

One of the two main results of this paper asserts that Question \ref{question} can be reduced to the case of non-solvable groups.  Precisely, we have the following theorem:

\begin{Main}[{Theorem \ref{theorem:solvable}}]
Any finite solvable group admits a uniform cyclic group factorization.
In particular, any finite abelian group admits a uniform cyclic group factorization.
\end{Main}

The other main result of this paper asserts that Question 1.1 can be further reduced to the case of uniform (not necessarily cyclic) group factorizations for simple groups. Precisely, we have the following theorem:

\begin{Main}[Theorem \ref{theorem:simple}]
Let $n$ be a positive integer. 
The following are equivalent.
\begin{enumerate}
    \item[(a)] Any $G \in \mathscr{G}_n$ admits a uniform cyclic group factorization, where $\mathscr{G}_n$ is the set of groups of order at most $n$.
    \item[(b)] Any $G \in \mathscr{G}_n^{\dagger}$ admits a proper uniform group factorization, where $\mathscr{G}_n^{\dagger}$ consists of groups in $\mathscr{G}_n$ not cyclic of prime power.
    \item[(c)] Any $G \in \mathscr{G}_n^*$ admits a proper uniform group factorization, where $\mathscr{G}_n^*$ consists of simple groups in $\mathscr{G}_n^{\dagger}$.
\end{enumerate}
\end{Main}
This paper consists of five sections.
In Section 2, we introduce the notion of uniform cyclic group factorizations. In Section 3, we prove the main theorems described above.
In Section 4, we discuss relationships among logarithmic signatures and uniform cyclic group factorizations of finite groups.
In Section 5, we construct some concrete examples of uniform group factorizations by following methods of constructing logarithmic signatures (e.g., \cite{GonzalezVasco03}).
In particular, we give uniform cyclic group factorizations of the alternating groups.

\section{Uniform cyclic group factorizations}\label{s:notation}

Let $G$ be a finite group and $\mathcal{H} = (H_1, H_2, \ldots, H_k)$ an ordered tuple of subsets of $G$. 
Define the multiplication map $\mult_{\mathcal{H}}: H_1 \times H_2\times \cdots \times H_k \ra G$ by
\[
\mult_{\mathcal{H}}(h_1, h_2, \ldots, h_k) := h_1h_2 \cdots h_k.
\]
Then, $\mathcal{H}$ is called a \textit{factorization} of $G$ 
if $\mult_{\mathcal{H}}$ is surjective.
The integer $k$ is called the \textit{length} of $\mathcal{H}$.
We write $G = H_1H_2\cdots H_k$ when $\mathcal{H}$ is a factorization of $G$.
If all $H_1,H_2,\ldots, H_k$ are proper subsets of $G$, then $\mathcal{H}$ is called a \textit{proper factorization} of $G$. 

\begin{definition}
Let $G$ be a finite group and $\mathcal{H}=(H_1,H_2,\ldots, H_{k})$ a factorization of $G$.
\begin{enumerate}
\item[(1)] The factorization $\mathcal{H}$ is a \textit{uniform factorization} of $G$ if $|\mult_{\mathcal{H}}^{-1}(g)|$ does not depend on $g \in G$. 
The integer $t := |\mult_{\mathcal{H}}^{-1}(g)|$ is called the \textit{multiplicity} of $\mathcal{H}$. 
In this case, we write $G \overset{t}{\equiv} H_1H_2\cdots H_k$, or simply $G \equiv H_1H_2\cdots H_k$. 
\item[(2)] The factorization $\mathcal{H}$ is a \textit{uniform group factorization} of $G$ if $\mathcal{H}$ is a uniform factorization of $G$ and all $H_1,H_2,\ldots, H_k$ are subgroups of $G$.
\item[(3)] The factorization $\mathcal{H}$ is a \textit{uniform cyclic group factorization} of $G$ if $\mathcal{H}$ is a uniform group factorization of $G$ and all $H_1,H_2,\ldots, H_k$ are cyclic subgroups of $G$.
\end{enumerate}
\end{definition}

\begin{remark}\label{rem:direct_product}
If a finite group $G$ admits a direct product decomposition $G = H_1 \times H_2 \times\cdots \times H_k$ into (normal) subgroups $H_1,H_2,\dots,H_k$, then it follows immediately that $(H_1,H_2,\dots,H_k)$ is a uniform group factorization of $G$ with multiplicity one.
\end{remark}

The following lemma enables recursive construction of uniform group factorizations.

\begin{lemma}\label{lem:arrange}
Let $G$ be a group and $\mathcal{H} = (H_1,H_2, \ldots, H_k)$ a uniform group factorization of $G$ with multiplicity $t$. 
If each $H_i$ ($1 \leq i \leq k$) has a uniform cyclic group factorization with multiplicity $t_i$, then $G$ also has a uniform cyclic group factorization with multiplicity $t \cdot \prod_{i=1}^kt_i$.
\end{lemma}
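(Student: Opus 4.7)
The plan is to prove this by concatenating the given factorizations and counting preimages via a fiber-wise decomposition. For each $i$, write the assumed uniform cyclic group factorization as $H_i \overset{t_i}{\equiv} C_{i,1} C_{i,2} \cdots C_{i,\ell_i}$ with each $C_{i,j}$ cyclic, and form the concatenated tuple
\[
\mathcal{C} := (C_{1,1},\ldots,C_{1,\ell_1},\,C_{2,1},\ldots,C_{2,\ell_2},\,\ldots,\,C_{k,1},\ldots,C_{k,\ell_k}).
\]
All entries of $\mathcal{C}$ are cyclic subgroups of $G$, so the only thing to check is that $\mathcal{C}$ is a uniform factorization of $G$ with the claimed multiplicity.

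The key step is a fiber-wise count. For any $g\in G$, I would partition the preimage $\mult_{\mathcal{C}}^{-1}(g)$ according to the ``coarser'' decomposition: each tuple in $\mult_{\mathcal{C}}^{-1}(g)$ determines elements $h_i := c_{i,1} c_{i,2}\cdots c_{i,\ell_i}\in H_i$ satisfying $h_1 h_2 \cdots h_k = g$. This gives a map
\[
\Phi:\mult_{\mathcal{C}}^{-1}(g)\lra \mult_{\mathcal{H}}^{-1}(g),\qquad (c_{i,j})_{i,j}\longmapsto (h_1,\ldots,h_k).
\]
For a fixed $(h_1,\ldots,h_k)\in\mult_{\mathcal{H}}^{-1}(g)$, the fiber $\Phi^{-1}(h_1,\ldots,h_k)$ is precisely the product of the sets $\mult_{(C_{i,1},\ldots,C_{i,\ell_i})}^{-1}(h_i)$ over $i=1,\ldots,k$, whose cardinality is $\prod_{i=1}^{k} t_i$ by the uniformity of each $H_i$-factorization.

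Summing over the fibers and using $|\mult_{\mathcal{H}}^{-1}(g)|=t$, we obtain
\[
|\mult_{\mathcal{C}}^{-1}(g)| = \sum_{(h_1,\ldots,h_k)\in\mult_{\mathcal{H}}^{-1}(g)} \prod_{i=1}^{k}t_i = t\cdot\prod_{i=1}^{k}t_i,
\]
which is independent of $g$. In particular, since this number is nonzero, $\mult_{\mathcal{C}}$ is surjective, so $\mathcal{C}$ is a factorization of $G$; and by the displayed equality it is uniform with multiplicity $t\cdot\prod_{i=1}^k t_i$, as required. There is no real obstacle here beyond careful index bookkeeping; the argument is essentially a two-step counting using the definition of multiplicity applied once to $\mathcal{H}$ and once to each factorization of $H_i$.
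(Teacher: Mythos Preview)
Your proof is correct and follows essentially the same approach as the paper: both form the concatenated tuple of cyclic factors and compute $|\mult_{\mathcal{C}}^{-1}(g)|$ by partitioning according to the coarse decomposition $(h_1,\ldots,h_k)\in\mult_{\mathcal{H}}^{-1}(g)$, with each fiber contributing $\prod_i t_i$. Your version is slightly more explicit in naming the map $\Phi$ and in noting that surjectivity follows from the nonzero count, but the argument is the same.
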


\begin{proof} 
Let $\mathcal{H}_i = (H_{i,1}, \ldots, H_{i,\ell_i})$ be a uniform cyclic group factorization of $H_i$ for $1 \leq i \leq k$. 
Let $\mathcal{H}'$ be the ordered tuple of cyclic groups in the following:
\[
\mathcal{H}' := (H_{1,1}, \ldots, H_{1,\ell_1}, \ldots, H_{k,1}, \ldots, H_{k,\ell_k}).
\]
Since $\mult_{\mathcal{H'}}^{-1}(g)$ for any $g \in G$ is expressed as
\[
 \bigsqcup_{(h_1,\ldots,h_k) \in \mult_{\mathcal{H}}^{-1}(g)} \left\{(h_{1,1}, \ldots, h_{1,\ell_1}, \ldots, h_{k,1}, \ldots, h_{k,\ell_k}) \mid (h_{i,1}, \ldots, h_{i,\ell_i}) \in \mult_{\mathcal{H}_i}^{-1}(h_i)\right\},
\]
we have $|\mult_{\mathcal{H'}}^{-1}(g)| = t\cdot \prod_{i=1}^kt_i$, which proves the statement. 
\end{proof}

\begin{example}
For a finite group $G$, uniform cyclic group factorizations are not necessarily unique.
In fact, the symmetric group $S_5$ has the following three uniform cyclic group factorizations: 
\begin{align*}
&\mathcal{H}_1 = \left(\langle (1,2) \rangle, \langle (1,2,3) \rangle, \langle (1,2,3,4) \rangle, \langle (1,2,3,4,5) \rangle \right),\\
&\mathcal{H}_2= \left(\langle (1,2,3,4,5)\rangle, \langle(1,2,4,3)\rangle,\langle(1,2,3)(4,5)\rangle \right),\\
&\mathcal{H}_3= \left(\langle (1,2,3,4,5)\rangle, \langle(1,2,3,4)\rangle,\langle(1,3,2,4)\rangle, \langle (1,2,3) \rangle \right).
\end{align*}
The multiplicities of $\mathcal{H}_1$ and $\mathcal{H}_2$ are $1$ since the product of the cardinality of each subgroup equals $|S_5|$. The multiplicity of $\mathcal{H}_3$ is $2$ since the product of the cardinality of each subgroup equals $2 \cdot |S_5|$.
\end{example}

The following lemma is fundamental.

\begin{lemma}\label{lemma:hom}
Let $G$ and $G'$ be two finite groups, and $f: G\to G'$ a group homomorphism.
Let $H_1, H_2, \ldots, H_k$ be subgroups of $G$. Define $t_i := |\mathsf{ker}(f) \cap H_i|$. If $\mathcal{H}' := (f(H_1), f(H_2), \ldots, f(H_k))$ is a uniform group factorization of $G'$ with multiplicity $t'$, then
\[
\mathcal{H} = (H_1, H_2, \ldots, H_k, \mathsf{ker}(f))
\] 
is a uniform group factorization of $G$ with multiplicity $t'\cdot \prod_{i=1}^kt_i$.
\end{lemma}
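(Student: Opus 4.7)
The plan is to reduce the computation of $|\mult_{\mathcal{H}}^{-1}(g)|$ for a fixed $g \in G$ to counting problems governed by the given multiplicity data. First I would observe that a tuple $(h_1,\ldots,h_k,z) \in H_1 \times \cdots \times H_k \times \mathsf{ker}(f)$ lies in $\mult_{\mathcal{H}}^{-1}(g)$ exactly when $z = (h_1 \cdots h_k)^{-1} g$ and this element lies in $\mathsf{ker}(f)$, which holds if and only if $f(h_1) f(h_2) \cdots f(h_k) = f(g)$ in $G'$. Hence $z$ is uniquely determined by $(h_1,\ldots,h_k)$, and counting $\mult_{\mathcal{H}}^{-1}(g)$ reduces to counting tuples $(h_1,\ldots,h_k) \in H_1 \times \cdots \times H_k$ whose images under $f$ multiply to $f(g)$.

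Next I would stratify this count according to the image tuple $(f(h_1), \ldots, f(h_k)) \in f(H_1) \times \cdots \times f(H_k)$. By hypothesis, the uniformity of $\mathcal{H}'$ with multiplicity $t'$ yields exactly $t'$ tuples $(g'_1, \ldots, g'_k) \in f(H_1) \times \cdots \times f(H_k)$ satisfying $g'_1 g'_2 \cdots g'_k = f(g)$, and this number does not depend on $g$ since $f(g)$ is an element of $G'$. For each such image tuple, I would count, independently for each index $i$, the number of lifts $h_i \in H_i$ with $f(h_i) = g'_i$.

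The restricted map $f|_{H_i} \colon H_i \to f(H_i)$ is a surjective group homomorphism with kernel $H_i \cap \mathsf{ker}(f)$ of cardinality $t_i$, so every nonempty fiber has exactly $t_i$ elements; since $g'_i \in f(H_i)$ this fiber is nonempty. Multiplying the counts gives $|\mult_{\mathcal{H}}^{-1}(g)| = t' \cdot \prod_{i=1}^{k} t_i$, a value independent of $g$. The surjectivity of $\mult_{\mathcal{H}}$ (needed for $\mathcal{H}$ to be a factorization at all) follows from the same expression, since $t' \geq 1$ and each $t_i \geq 1$ imply that every $g \in G$ has at least one preimage.

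No deep obstacle is expected: the only step worth flagging is the uniformity of fiber sizes of $f|_{H_i}$, which is automatic from the first isomorphism theorem applied to $f|_{H_i}$ but must be made explicit in order to justify the product $\prod_{i=1}^{k} t_i$.
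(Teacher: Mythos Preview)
Your proposal is correct and follows essentially the same approach as the paper's proof: both eliminate the $\mathsf{ker}(f)$-coordinate by observing it is forced once $(h_1,\ldots,h_k)$ is chosen, then stratify by the image tuple $(f(h_1),\ldots,f(h_k)) \in \mult_{\mathcal{H}'}^{-1}(f(g))$ and count lifts coordinatewise via the fiber size $t_i$ of $f|_{H_i}$. The paper packages this with explicit maps $\phi$ and $\psi_{h'}$, but the underlying counting argument is identical to yours.
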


\begin{proof}
For any $g \in G$, we define a map 
$\phi:\mult_{\mathcal{H}}^{-1}(g) \to \mult^{-1}_{\mathcal{H}'}(f(g))$ 
by sending $(h_1,h_2,\ldots, h_k, x)\in\mult_{\mathcal{H}}^{-1}(g)$ 
to 
$(f(h_1),f(h_2),\ldots, f(h_k)) \in \mult^{-1}_{\mathcal{H'}}(f(g))$.
The map $\phi$ is surjective because for any $(h'_1, h'_2, \ldots, h'_k) \in \mult^{-1}_{\mathcal{H'}}(f(g))$, by taking $h_i \in H_i$ such that $f(h_i) = h'_i$, we have $(h_1 h_2 \cdots h_k)^{-1}g \in \mathsf{ker}(f)$ and $\phi(h_1, h_2, \ldots, h_k, (h_1 h_2 \cdots h_k)^{-1}g) = (h'_1, h'_2, \ldots, h'_k)$.
Then we have
\[
\mult_{\mathcal{H}}^{-1}(g) = \bigsqcup_{(h_1',h_2', \ldots, h_k') \in \mult_{\mathcal{H}'}^{-1}(f(g))} \phi^{-1}(h_1',h_2', \ldots, h_k').
\]
Let $h' = (h_1',h_2', \ldots, h_k') \in \mult_{\mathcal{H}'}^{-1}(f(g))$.
Put 
\[
A_{h'} := (f^{-1}(h_1') \cap H_1) \times (f^{-1}(h_2') \cap H_2) \times \cdots \times (f^{-1}(h_k') \cap H_k) \subseteq H_1\times H_2\times \cdots \times H_k.
\]
Since the map $\psi_{h'}: A_{h'}\to \phi^{-1}(h')$ defined by 
\[ \psi_{h'}((h_1, \ldots, h_k)) = (h_1, \ldots, h_k, h_k^{-1} \cdots h_1^{-1}g) \]
is bijective and $|f^{-1}(h_i')\cap H_i| = |\mathsf{ker}(f) \cap H_i|=t_i$, it implies that $|\phi^{-1}(h')| = \prod_{i=1}^kt_i$. 
Thus we have
\[
|\mult_{\mathcal{H}}^{-1}(g)| = \sum_{h' \in \mult_{\mathcal{H}'}^{-1}(f(g))} |\phi^{-1}(h')| = |\mult_{\mathcal{H}'}^{-1}(f(g))|\cdot \prod_{i=1}^kt_i = t'\cdot \prod_{i=1}^kt_i.
\]
Therefore, $\mathcal{H}$ is a uniform group factorization of $G$ with multiplicity $t' \cdot \prod_{i=1}^kt_i$.
\end{proof}

\begin{proposition}\label{prop:CyclicOfPrimeOrder}
Let $G$ be a cyclic group.
The following are equivalent: 
\begin{enumerate}
\item[(i)] $G$ admits a proper uniform group factorization.
\item[(ii)] $|G|$ is not a prime power.
\end{enumerate}
Moreover, if these conditions hold, then the factorization in (i) can be made cyclic and with multiplicity one.
\end{proposition}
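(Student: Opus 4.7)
The plan is to handle the two directions separately, with the harder-looking direction being essentially trivial once one remembers the subgroup structure of cyclic $p$-groups.

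For (ii) $\Rightarrow$ (i) together with the ``moreover'' clause, I would argue as follows. Assume $|G|$ is not a prime power, so we can write $|G| = ab$ with $a, b > 1$ and $\gcd(a,b) = 1$. Since $G$ is cyclic, it has unique (cyclic) subgroups $A$ and $B$ of orders $a$ and $b$ respectively, and $A \cap B = \{e\}$ because $\gcd(a,b)=1$. Then $G = A \times B$ as an internal direct product (both factors being normal as $G$ is abelian), so by Remark \ref{rem:direct_product} the pair $(A,B)$ is a uniform group factorization of $G$ with multiplicity one. Both $A$ and $B$ are proper (since $a,b > 1$) and cyclic, so this simultaneously establishes (i) and the final sentence of the proposition.

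For (i) $\Rightarrow$ (ii), I would prove the contrapositive: if $|G| = p^k$ is a prime power, then $G$ admits no proper factorization at all (let alone a uniform group one). The key point is the well-known fact that in a cyclic $p$-group, the lattice of subgroups is a chain $\{e\} \subsetneq C_p \subsetneq C_{p^2} \subsetneq \cdots \subsetneq C_{p^k} = G$. In particular, every proper subgroup of $G$ is contained in the unique maximal subgroup $M$ of order $p^{k-1}$. Hence for any tuple $(H_1,\dots,H_k)$ of proper subgroups we have $H_i \subseteq M$ for every $i$, and therefore $H_1 H_2 \cdots H_k \subseteq M \subsetneq G$, contradicting the assumption that $(H_1,\dots,H_k)$ is a factorization of $G$.

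There is no real obstacle to overcome here: the ``hard'' direction reduces to the observation that the subgroup lattice of a cyclic $p$-group is totally ordered, and the ``easy'' direction is immediate from the coprime decomposition of a cyclic group combined with Remark \ref{rem:direct_product}. The only thing to be careful about is to note explicitly in (ii) $\Rightarrow$ (i) that the uniqueness of subgroups of each order in a cyclic group yields $A \cap B = \{e\}$ and hence the internal direct product structure, which is what supplies both uniformity and multiplicity one.
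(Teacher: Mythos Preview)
Your proof is correct and follows essentially the same approach as the paper: for (ii) $\Rightarrow$ (i) both arguments write $|G|$ as a product of two coprime nontrivial factors and invoke the resulting direct product decomposition via Remark~\ref{rem:direct_product}, while for the contrapositive of (i) $\Rightarrow$ (ii) the paper observes that a generator $\gamma$ cannot lie in any product of proper subgroups, which is equivalent to your observation that all proper subgroups lie in the unique maximal subgroup $M$. The only cosmetic issue is your reuse of the letter $k$ for both the exponent in $|G|=p^k$ and the length of the factorization tuple.
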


\begin{proof}
Let $\gamma$ be a fixed generator of $G$.
Set $n := |G|$. 

To show that negation of (ii) implies negation of (i), assume that $n = p^r$ for some prime number $p$. 
Then any proper (and nontrivial) subgroup of $G$ is a cyclic group of the form $\langle \gamma^{p^{r-r'}}\rangle$, where $r > r' \geq 1$.
Thus, any tuple of proper subgroups $(H_1, H_2, \ldots, H_k)$ can not be a factorization since $\gamma \not\in H_1H_2 \cdots H_k$.
Therefore, $G$ has no proper uniform group factorization.

To show that (ii) implies (i), assume that $n$ is not a prime power.
In this case, $n$ can be written as $n = n_1 n_2$ with $n_1, n_2 > 1$ being coprime. 
Now we have $G \simeq \mathbb{Z}/n\mathbb{Z} \simeq \mathbb{Z}/n_1\mathbb{Z} \times\mathbb{Z}/n_2\mathbb{Z}$ by Chinese Remainder Theorem, therefore $G$ admits a proper uniform cyclic group factorization with multiplicity one by Remark \ref{rem:direct_product}.
\end{proof}

\section{Main results}\label{s:proofs}

The aim of this section is to prove the main results of this paper.

First, we mention that the next assertion follows from Lemma \ref{lemma:hom} immediately.

\begin{lemma}\label{lemma:normal}
Let $G$ be a finite group with a proper normal subgroup $N$, and $\pi: G \to G/N$ the canonical surjection. 
Let $H_1, H_2, \ldots, H_k$ be subgroups of $G$. If $\mathcal{H}' := (\pi(H_1), \pi(H_2), \ldots, \pi(H_k))$ is a uniform group factorization of $G/N$, then
\[ \mathcal{H} := (H_1,H_2, \ldots, H_k, N) \]
is a uniform group factorization of $G$.
Moreover, if $\mathcal{H}'$ is a proper uniform group factorization, then so is $\mathcal{H}$.
\end{lemma}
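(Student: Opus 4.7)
The plan is to obtain this as a direct specialization of Lemma \ref{lemma:hom}. Take $f = \pi: G \to G/N$ in Lemma \ref{lemma:hom}, so that $G' = G/N$ and $\mathsf{ker}(f) = N$. By hypothesis, $\mathcal{H}' = (\pi(H_1), \ldots, \pi(H_k))$ is a uniform group factorization of $G/N$ with some multiplicity $t'$. Setting $t_i := |N \cap H_i|$, Lemma \ref{lemma:hom} immediately yields that $\mathcal{H} = (H_1, H_2, \ldots, H_k, N)$ is a uniform group factorization of $G$ with multiplicity $t' \cdot \prod_{i=1}^k t_i$. This disposes of the first assertion without any additional computation.

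For the ``moreover'' part I only need to check that properness transfers. First, $N$ appears as the final component of $\mathcal{H}$, and by assumption $N$ is a proper subgroup of $G$, so that component is already proper. Next, assume $\pi(H_i)$ is a proper subgroup of $G/N$ for each $i$. If some $H_i$ were equal to $G$, then $\pi(H_i) = G/N$ would not be proper; contrapositively, $\pi(H_i) \subsetneq G/N$ forces $H_i \subsetneq G$. Hence every component of $\mathcal{H}$ is a proper subgroup of $G$, so $\mathcal{H}$ is a proper uniform group factorization.

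I do not anticipate any real obstacle: Lemma \ref{lemma:hom} has already encapsulated the combinatorial counting argument involving fibres of $\pi$, and the properness check is an elementary observation about images of subgroups under the quotient map. The only thing worth mentioning explicitly in the proof is the identification $\mathsf{ker}(\pi) = N$, which is what makes this lemma a clean corollary rather than a separate argument.
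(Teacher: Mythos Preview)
Your proposal is correct and mirrors the paper's own treatment: the paper simply states that the lemma follows from Lemma~\ref{lemma:hom} immediately, and your specialization $f = \pi$, $\mathsf{ker}(f) = N$ is exactly that. Your explicit verification of the ``moreover'' clause is a welcome addition that the paper leaves implicit.
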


The next lemma is easy but is useful in proving our first main theorem.

\begin{lemma}\label{lemma:Abel}
Any finite abelian group admits a uniform cyclic group factorization.
\end{lemma}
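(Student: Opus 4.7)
The plan is to invoke the fundamental theorem of finite abelian groups, which asserts that every finite abelian group $G$ can be written as an internal direct product
\[
G = C_1 \times C_2 \times \cdots \times C_k
\]
of cyclic subgroups $C_1, C_2, \ldots, C_k$ (for instance, the invariant factor or primary decomposition). Each $C_i$ is cyclic by construction, so the ordered tuple $\mathcal{H} = (C_1, C_2, \ldots, C_k)$ is a tuple of cyclic subgroups of $G$.

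Next, I would simply apply Remark \ref{rem:direct_product}: because $\mathcal{H}$ comes from an internal direct product decomposition into normal subgroups, the multiplication map $\mult_{\mathcal{H}}$ is bijective, so $\mathcal{H}$ is a uniform group factorization of $G$ with multiplicity one. Since all entries are cyclic, this is in fact a uniform cyclic group factorization, completing the proof.

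There is essentially no obstacle here; the statement is a direct specialization of Remark \ref{rem:direct_product} once the structure theorem is invoked. (One could alternatively give a self-contained argument by induction on $|G|$: pick a prime $p$ dividing $|G|$, choose a cyclic subgroup $C$ of order equal to the exponent of the Sylow $p$-subgroup $P_p$, split off a complement of $C$ in $P_p$, recombine with a complement of $P_p$ in $G$ (guaranteed by the Schur–Zassenhaus theorem in the abelian setting, which is trivial), and apply the induction hypothesis together with Lemma \ref{lem:arrange}. But the one-line proof via the structure theorem is preferable.)
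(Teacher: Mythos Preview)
Your proof is correct and follows essentially the same approach as the paper: invoke the structure theorem for finite abelian groups to write $G$ as a direct product of cyclic subgroups, then apply Remark~\ref{rem:direct_product} to conclude that this yields a uniform cyclic group factorization with multiplicity one. The alternative inductive argument you sketch is unnecessary here but not incorrect.
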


\begin{proof}
The structure theorem for finite abelian groups implies that the group in the statement is a direct product of cyclic subgroups.
Then the claim follows from Remark \ref{rem:direct_product}.
\end{proof}

Now we give the first main theorem of this paper.

\begin{theorem}\label{theorem:solvable}
Any finite solvable group admits a uniform cyclic group factorization.
\end{theorem}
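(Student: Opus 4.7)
The plan is to prove this by strong induction on $|G|$. The base of the induction is handled by Lemma \ref{lemma:Abel}: if $G$ is abelian (in particular if $|G|$ is small enough to force $G$ to be cyclic or abelian), then $G$ already admits a uniform cyclic group factorization. So assume $G$ is a non-abelian finite solvable group and that the theorem holds for every solvable group of order strictly less than $|G|$.

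Take $N = [G,G]$ to be the commutator subgroup. Since $G$ is non-abelian, $N$ is a nontrivial proper normal subgroup, and since $G$ is solvable, $N$ is itself a solvable group of order strictly less than $|G|$. The quotient $G/N$ is abelian, hence by Lemma \ref{lemma:Abel} it admits a uniform cyclic group factorization, say $(\overline{K}_1, \overline{K}_2, \ldots, \overline{K}_\ell)$ with $\overline{K}_i = \langle \overline{g}_i\rangle$. The key observation is that cyclic subgroups of a quotient always lift to cyclic subgroups: picking any preimage $g_i \in G$ of $\overline{g}_i$ under the canonical surjection $\pi\colon G \to G/N$ and setting $K_i := \langle g_i\rangle$, we have that $K_i$ is a cyclic subgroup of $G$ with $\pi(K_i) = \overline{K}_i$. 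Applying Lemma \ref{lemma:normal} to the subgroups $K_1, \ldots, K_\ell$ therefore yields that
\[
\mathcal{H} := (K_1, K_2, \ldots, K_\ell, N)
\]
is a uniform group factorization of $G$.

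Now each $K_i$ is cyclic, so it trivially admits a uniform cyclic group factorization of length one (itself) with multiplicity one. By the induction hypothesis, the solvable group $N$ of smaller order admits a uniform cyclic group factorization. Lemma \ref{lem:arrange} applied to $\mathcal{H}$ then refines $\mathcal{H}$ into a uniform cyclic group factorization of $G$, completing the induction. Essentially all the work has been packaged into the preceding lemmas, so there is no single difficult step; the only point that needs care is the lifting of a cyclic subgroup from $G/N$ to a cyclic subgroup of $G$, which succeeds because cyclicity is a condition on a single generator rather than on the full subgroup.
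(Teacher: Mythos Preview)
Your proof is correct and follows essentially the same strategy as the paper's: both exploit a subnormal series with abelian quotients, lift cyclic generators from each abelian quotient via Lemma~\ref{lemma:normal}, and assemble the pieces with Lemma~\ref{lem:arrange}. The only difference is organizational---you phrase it as strong induction on $|G|$ using one step of the derived series (taking $N=[G,G]$), whereas the paper writes out the full subnormal series at once and recurses along it; the content is the same.
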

\begin{proof}
Let
\[
\{1\}=G_{\ell} \leq G_{\ell-1}\leq \cdots  \leq G_1\leq G_0=G
\]
be a subnormal series of finite length with strict inclusions such that $G_{i}/G_{i+1}$ is an abelian group for $0\leq i\leq \ell-1$.
From Lemma \ref{lemma:Abel}, $G_{i}/G_{i+1}$ admits a uniform cyclic group factorization, say $\mathcal{H}'_i = (H'_{i,1}, \ldots, H'_{i,k_i})$ for $0\leq i\leq \ell-1$. 
Now by taking a preimage of a generator of $H'_{i,j}$ ($j = 1,\dots,k_i$), we can construct a cyclic subgroup $H_{i,j}$ of $G_i$ with $\pi(H_{i,j}) = H'_{i,j}$.
Then by Lemma \ref{lemma:normal}, $\mathcal{H}_i = (H_{i,1}, \ldots, H_{i,k_i}, G_{i+1})$ is a uniform group factorization of $G_i$ for $0 \leq i \leq \ell-1$.
By applying Lemma \ref{lem:arrange} recursively for $i = \ell-1,\ell-2,\dots,0$, it follows that
\[
\mathcal{H}_i := (H_{i,1}, \ldots, H_{i,k_i}, H_{i+1,1}, \ldots, H_{i+1,k_{i+1}},  \ldots, H_{\ell-1,1}, \ldots, H_{\ell-1,k_{\ell-1}})
\]
is a uniform cyclic group factorization of $G_i$.
Now $\mathcal{H}_0$ is the factorization of $G = G_0$ as in the statement.
\end{proof}

For a positive integer $n$, we define three sets $\mathscr{G}_n$, $\mathscr{G}_n^{\dagger}$, and $\mathscr{G}_n^*$ as follows. 
\begin{itemize}
\item $\mathscr{G}_n$ is the set consisting of isomorphism classes of finite groups of order at most $n$. 
\item $\mathscr{G}_n^{\dagger}$ is the subset of $\mathscr{G}_n$ obtained by removing cyclic groups of prime power.
\item $\mathscr{G}_n^*$ is the subset of $\mathscr{G}_n^{\dagger}$ obtained by removing non-simple groups.
\end{itemize} 
By definition, we have the following inclusions:
\[
\mathscr{G}_n^* \subseteq \mathscr{G}_n^{\dagger} \subseteq \mathscr{G}_n.
\]

Now we give the second main theorem of this paper.

\begin{theorem}\label{theorem:simple}
Let $n\geq 1$ be an integer. 
The following are equivalent. 
\begin{enumerate}
    \item[(a)] Any $G \in \mathscr{G}_n$ admits a uniform cyclic group factorization.
    \item[(b)] Any $G \in \mathscr{G}_n^{\dagger}$ admits a proper uniform group factorization.
    \item[(c)] Any $G \in \mathscr{G}_n^*$ admits a proper uniform group factorization. 
\end{enumerate}
\end{theorem}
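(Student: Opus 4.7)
The plan is to establish the cycle (a)$\Rightarrow$(b)$\Rightarrow$(c)$\Rightarrow$(a); the middle step (b)$\Rightarrow$(c) is immediate from the inclusion $\mathscr{G}_n^* \subseteq \mathscr{G}_n^{\dagger}$.

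For (a)$\Rightarrow$(b), I fix $G \in \mathscr{G}_n^{\dagger}$ and split on whether $G$ is cyclic. If $G$ is cyclic, then by the definition of $\mathscr{G}_n^{\dagger}$ its order is not a prime power, and Proposition \ref{prop:CyclicOfPrimeOrder} directly yields a proper uniform (cyclic) group factorization. If $G$ is non-cyclic, the uniform cyclic group factorization $(H_1,\ldots,H_k)$ provided by (a) is automatically proper, since no cyclic subgroup $H_i$ can equal the non-cyclic group $G$.

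The substance lies in (c)$\Rightarrow$(a), which I prove by strong induction on $|G|$ for $G \in \mathscr{G}_n$; the structure mirrors the proof of Theorem \ref{theorem:solvable}, with simple groups now treated by hypothesis (c) rather than by abelianness. When $G$ is cyclic of any order, the length-one tuple $(G)$ is trivially a uniform cyclic group factorization. When $G$ is non-cyclic and simple, $G$ is necessarily non-abelian simple, so $G \in \mathscr{G}_n^*$; by (c), $G$ admits a proper uniform group factorization $(H_1,\ldots,H_k)$. Each $H_i$ is a proper subgroup with $|H_i|<|G|$, so the induction hypothesis supplies a uniform cyclic group factorization of each $H_i$, and Lemma \ref{lem:arrange} glues them into one for $G$. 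When $G$ is non-cyclic and non-simple, I fix a proper nontrivial normal subgroup $N$ of $G$ and apply the induction hypothesis to $G/N$ to obtain a uniform cyclic group factorization $(\bar H_1,\ldots,\bar H_k)$ of $G/N$; exactly as in the proof of Theorem \ref{theorem:solvable}, I lift each $\bar H_i$ to a cyclic subgroup $H_i \leq G$ generated by a preimage of a generator. Lemma \ref{lemma:normal} then makes $(H_1,\ldots,H_k,N)$ a uniform group factorization of $G$, and since each $H_i$ is cyclic (hence trivially admits a uniform cyclic group factorization) and $|N|<|G|$, a final application of Lemma \ref{lem:arrange} (combined with the induction hypothesis applied to $N$) produces a uniform cyclic group factorization of $G$.

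The only delicate point is the bookkeeping for the induction: each group fed back into the hypothesis — the proper subgroups $H_i$ in the simple case, and the subgroup $N$ together with the quotient $G/N$ in the non-simple case — must lie in $\mathscr{G}_n$ with order strictly less than $|G|$, which is clear in each instance. With that verified the induction closes and the theorem follows.
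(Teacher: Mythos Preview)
Your proof is correct and follows essentially the same route as the paper's: the same case split for (a)$\Rightarrow$(b), and for (c)$\Rightarrow$(a) the same trichotomy (cyclic / non-cyclic simple / non-cyclic non-simple) handled via Proposition~\ref{prop:CyclicOfPrimeOrder}, Lemma~\ref{lemma:normal}, and Lemma~\ref{lem:arrange}. The only cosmetic difference is that you run a strong induction on $|G|$ within $\mathscr{G}_n$, whereas the paper inducts on the parameter $n$ itself (deducing (a) for $\mathscr{G}_{n-1}$ from (c) for $\mathscr{G}_{n-1}^*$ and then treating groups of order exactly $n$); the two inductions are interchangeable here and yield the same argument.
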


\begin{proof}
(a) $\Longrightarrow$ (b):
Let $G \in \mathscr{G}_n^{\dagger}$.
If $G$ is not cyclic, it has a uniform cyclic group factorization by (a). Since $G$ is not cyclic, the factorization is proper. If $G$ is a cyclic group whose order is not a prime power, it admits a proper uniform group factorization by Proposition \ref{prop:CyclicOfPrimeOrder}.
Therefore, (a) implies (b).

(b) $\Longrightarrow$ (c): This implication is trivial. 

(c) $\Longrightarrow$ (a):
We show the assertion by induction on $n$.
For a positive integer $n$, we suppose that (c) holds.
Then the condition (c) for $\mathscr{G}_{n-1}^*$ also holds, therefore the induction hypothesis implies that the condition (a) for $\mathscr{G}_{n-1}$ holds as well.
Let $G \in \mathscr{G}_n$. If $G$ is a cyclic group (including the base case $n = 1$), then $G$ itself can be regarded as a uniform cyclic group factorization of $G$.
Thus, we may assume that $G$ is not cyclic.
If $G$ is a simple group, then $G$ admits a proper uniform group factorization, say $\mathcal{H}$, by (c).
Since (a) holds for $\mathscr{G}_{n-1}$ as mentioned above, each component of $\mathcal{H}$ admits a uniform cyclic group factorization, therefore $G$ also admits a uniform cyclic group factorization by Lemma \ref{lem:arrange}.
If $G$ is not a simple group, then a maximal normal subgroup of $G$ exists, say $N$.
Since $|G/N| < |G| \leq n$ and (a) holds for $\mathscr{G}_{n-1}$ as mentioned above, $G/N$ admits a uniform cyclic group factorization, say $\mathcal{H}'$.
From this, we can obtain a uniform group factorization $\mathcal{H}$ of $G$ as in Lemma \ref{lemma:normal}, where any component other than $N$ can be chosen as being cyclic (as well as those in $\mathcal{H}'$).
Now since $|N| < |G|$ and (a) holds for $\mathscr{G}_{n-1}$ as mentioned above, $N$ admits a uniform cyclic group factorization.
Hence by Lemma \ref{lem:arrange}, $G$ also admits a uniform cyclic group factorization.

Therefore, (c) implies (a).
\end{proof}

\section{Logarithmic signatures and uniform cyclic group factorizations}\label{s:LS}

Let $G$ be a finite group, and $\mathcal{H} = (H_1, H_2, \ldots, H_k)$ a tuple of subsets of $G$. 
The tuple $\mathcal{H}$ is called a \textit{logarithmic signature} (or an \textit{exact factorization}) of $G$, which is named by \cite{Magliveras86}, if $\mathcal{H}$ is a uniform factorization of $G$ with multiplicity one. 
If $\mathcal{H}$ is a logarithmic signature, the \textit{size} of $\mathcal{H}$ is defined by $\ell(\mathcal{H}) := |H_1| + |H_2| + \cdots + |H_k|$. 

Gonz{\'a}lez Vasco and Steinwandt~\cite{GonzalezVasco02} gave a lower bound on the size of logarithmic signatures.
The lower bound is given as follows.
Suppose that $|G| = \prod_{i=1}^m p_i^{a_i}$, where the $p_i$'s are distinct prime numbers and $a_i$ is a positive integer. 
Then they showed that the following inequality holds for any logarithmic signature $\mathcal{H}$:
\[
\ell(\mathcal{H}) \geq \sum_{i=1}^m a_i p_i.
\]
If the equality holds, $\mathcal{H}$ is called a \textit{minimal logarithmic signature} of $G$. 

Let $G$ be a finite group. 
We define the following sets:
\begin{align*}
&\mathsf{UF}(G) := \text{the set of uniform factorizations of $G$.}\\
&\mathsf{UGF}(G) := \text{the set of uniform group factorizations of $G$.}\\
&\mathsf{UCF}(G) := \text{the set of uniform cyclic group factorizations of $G$.}\\
&\mathsf{LS}(G) := \text{the set of logarithmic signatures of $G$.}\\
&\mathsf{LGS}(G) := \mathsf{LS}(G) \cap \mathsf{UGF}(G)\\
&\mathsf{LCS}(G) := \mathsf{LS}(G) \cap \mathsf{UCF}(G)\\
&\mathsf{MLS}(G) := \text{the set of minimal logarithmic signatures of $G$.}\\
&\mathsf{MLGS}(G) := \mathsf{MLS}(G) \cap \mathsf{UGF}(G)\\
&\mathsf{MLCS}(G) := \mathsf{MLS}(G) \cap \mathsf{UCF}(G)
\end{align*}

By definition, we have relations of inclusion among these sets. 
\[
\begin{tabular}{ccccc}
$\mathsf{UF}(G)$ & $\supseteq$ & $\mathsf{UGF}(G)$ & $\supseteq$ & $\mathsf{UCF}(G)$ \\
\rotatebox{90}{$\subseteq$} & & \rotatebox{90}{$\subseteq$} & & \rotatebox{90}{$\subseteq$}\\
$\mathsf{LS}(G)$ & $\supseteq$ & $\mathsf{LGS}(G)$ & $\supseteq$ & $\mathsf{LCS}(G)$\\
\rotatebox{90}{$\subseteq$} & & \rotatebox{90}{$\subseteq$} & & \rotatebox{90}{$\subseteq$}\\
$\mathsf{MLS}(G)$ & $\supseteq$ & $\mathsf{MLGS}(G)$ & $\supseteq$ & $\mathsf{MLCS}(G)$\\
\end{tabular}
\]
The next proposition shows that these notions are in fact distinct.

\begin{proposition}\label{lemma:separation}
The following statements hold:
\begin{enumerate}
\item[(1)] There exists $G$ such that $\mathsf{MLS}(G) \supsetneq \mathsf{MLGS}(G)$, $\mathsf{LS}(G) \supsetneq \mathsf{LGS}(G)$, and $\mathsf{UF}(G) \supsetneq \mathsf{UGF}(G)$. 
\item[(2)] There exists $G$ such that $\mathsf{MLGS}(G) \supsetneq \mathsf{MLCS}(G)$, $\mathsf{LGS}(G) \supsetneq \mathsf{LCS}(G)$, and $\mathsf{UGF}(G) \supsetneq \mathsf{UCF}(G)$. 
\item[(3)] There exists $G$ such that $\mathsf{UCF}(G) \supsetneq \mathsf{LCS}(G)$, $\mathsf{UGF}(G) \supsetneq \mathsf{LGS}(G)$, and $\mathsf{UF}(G) \supsetneq \mathsf{LS}(G)$. 
\item[(4)] There exists $G$ such that $\mathsf{LCS}(G) \supsetneq \mathsf{MLCS}(G)$, $\mathsf{LGS}(G) \supsetneq \mathsf{MLGS}(G)$, and $\mathsf{LS}(G) \supsetneq \mathsf{MLS}(G)$. 
\end{enumerate}
\end{proposition}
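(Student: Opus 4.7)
The plan is to produce, for each of the four parts, one concrete finite group $G$ together with a single explicit factorization $\mathcal{H}$ lying in the difference between the outermost container and the outermost contained set of the chain in that part; by the Hasse-style inclusion diagram displayed just before the proposition, any such $\mathcal{H}$ automatically witnesses all three claimed strict inclusions simultaneously. Concretely, for (1) it suffices to exhibit a witness in $\mathsf{MLS}(G)\setminus\mathsf{UGF}(G)$, for (2) one in $\mathsf{MLGS}(G)\setminus\mathsf{UCF}(G)$, for (3) one in $\mathsf{UCF}(G)\setminus\mathsf{LS}(G)$, and for (4) one in $\mathsf{LCS}(G)\setminus\mathsf{MLS}(G)$.

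For (1), I would take $G = \mathbb{Z}/4\mathbb{Z}$ and $\mathcal{H} = (\{0,1\},\{0,2\})$: a direct check shows $\mult_{\mathcal{H}}$ is a bijection, and the size $4 = 2\cdot 2$ matches the Gonz\'alez Vasco--Steinwandt bound for $|G|=2^2$, so $\mathcal{H}\in\mathsf{MLS}(G)$, while $\{0,1\}$ is not closed under addition, so $\mathcal{H}\notin\mathsf{UGF}(G)$. For (2), I would take $G = (\mathbb{Z}/2\mathbb{Z})^2$ and the length-one factorization $\mathcal{H} = (G)$: it is trivially a minimal logarithmic signature of size $4 = 2\cdot 2$ whose sole component is the (non-cyclic) subgroup $G$, so $\mathcal{H}\in\mathsf{MLGS}(G)\setminus\mathsf{UCF}(G)$. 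For (3), I would take $G = \mathbb{Z}/2\mathbb{Z}$ and $\mathcal{H} = (G,G)$; the four sums $0+0, 0+1, 1+0, 1+1$ cover $G$ with each element attained exactly twice, so $\mathcal{H}$ is a uniform cyclic group factorization of multiplicity $2$, hence not a logarithmic signature. For (4), I would take $G = \mathbb{Z}/6\mathbb{Z}$ and $\mathcal{H} = (G)$, which is plainly a logarithmic signature consisting of a cyclic subgroup, but whose size $6$ strictly exceeds the minimal bound $2+3=5$, so $\mathcal{H}\in\mathsf{LCS}(G)\setminus\mathsf{MLS}(G)$.

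No step here is really difficult: each verification reduces to a short computation inside a group of order at most six, and the inclusion lattice does the propagation from a single witness to all three strict inclusions in each part. The main point requiring care is to confirm in each part that the candidate $\mathcal{H}$ really belongs to the outer set of the chain---uniformity with the stated multiplicity, and, where relevant, exact matching of the Gonz\'alez Vasco--Steinwandt lower bound for minimality---but all such checks are immediate by direct enumeration in these small groups.
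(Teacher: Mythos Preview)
Your proposal is correct and follows essentially the same approach as the paper: for each part, exhibit a single witness $\mathcal{H}$ lying in the outermost set of the chain but outside the corresponding inner family, and let the inclusion diagram propagate the strict inclusions. Your examples for (1), (3), and (4) coincide with the paper's (or are specific instances of them), and for (2) you use the slightly simpler choice $G = (\mathbb{Z}/2\mathbb{Z})^2$ with $\mathcal{H} = (G)$ in place of the paper's $G = (\mathbb{Z}/2\mathbb{Z})^3$ with a length-two factorization, but the idea is identical.
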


\begin{proof}
(1) Let $G = C_4 = \langle \sigma \rangle$. 
Set $\mathcal{H} := (H_1, H_2)$ for $H_1 = \{1, \sigma\}$ and $H_2 = \{1, \sigma^2\}$. 
Since $G \overset{1}{\equiv} H_1H_2$ and $\ell(\mathcal{H})= 4 = 2 \cdot 2$, 
we have $\mathcal{H} \in \mathsf{MLS}(G)$, therefore $\mathcal{H} \in \mathsf{LS}(G)$ and $\mathcal{H} \in \mathsf{UF}(G)$.
On the other hand, since $H_1$ is not a group, we have $\mathcal{H} \not\in \mathsf{UGF}(G)$, therefore $\mathcal{H} \not\in \mathsf{LGS}(G)$ and $\mathcal{H} \not\in \mathsf{MLGS}(G)$.

(2) Let $G = C_2 \times C_2 \times C_2 = \langle \sigma_1 \rangle \times  \langle \sigma_2 \rangle \times  \langle \sigma_3 \rangle$. 
Set $\mathcal{H} := (H_1, H_2)$ for $H_1 = \langle \sigma_1, \sigma_2 \rangle$ and $H_2 = \langle \sigma_3 \rangle$. 
Since $G \overset{1}{\equiv} H_1H_2$ and $\ell(\mathcal{H})= 6 = 3 \cdot 2$, 
we have $\mathcal{H} \in \mathsf{MLGS}(G)$, therefore $\mathcal{H} \in \mathsf{LGS}(G)$ and $\mathcal{H} \in \mathsf{UGF}(G)$.
On the other hand, since $H_1$ is not cyclic, we have $\mathcal{H} \not\in \mathsf{UCF}(G)$, therefore $\mathcal{H} \not\in \mathsf{LCS}(G)$ and $\mathcal{H} \not\in \mathsf{MLCS}(G)$.

(3) Let $G$ be any cyclic group, and $\mathcal{H} := (G, G)$. 
Then we have $\mathcal{H} \in \mathsf{UCF}(G)$, therefore $\mathcal{H} \in \mathsf{UGF}(G)$ and $\mathcal{H} \in \mathsf{UF}(G)$.
On the other hand, we have $\mathcal{H} \not\in \mathsf{LS}(G)$, therefore $\mathcal{H} \not\in \mathsf{LGS}(G)$ and $\mathcal{H} \not\in \mathsf{LCS}(G)$.

(4) Let $n > 1$ be an integer which is neither $4$ nor a prime number. 
Let $G$ be a cyclic group of order $n$. 
Set $\mathcal{H} := (G)$. 
Then we have $\mathcal{H} \in \mathsf{LCS}(G)$, therefore $\mathcal{H} \in \mathsf{LGS}(G)$ and $\mathcal{H} \in \mathsf{LS}(G)$.
On the other hand, we have $\mathcal{H} \not\in \mathsf{MLS}(G)$, therefore $\mathcal{H} \not\in \mathsf{MLGS}(G)$ and $\mathcal{H} \not\in \mathsf{MLCS}(G)$.

The proof is completed. 
\end{proof}
 
There is a line of research on the following question: Does every finite group have a minimal logarithmic signature?
On the other hand, our question in this paper is: Does every finite group have a uniform cyclic group factorization?
Since there is no inclusion relation between $\mathsf{MLS}(G)$ and $\mathsf{UCF}(G)$ in general, these questions are independent. 
Compared to (minimal) logarithmic signatures, uniform cyclic group factorizations are more restrictive from the viewpoint that each $H_i$ is restricted to a cyclic group, while it is less restrictive from the viewpoint that the multiplicity may be larger than one.

The former viewpoint leads to the fact that not every existing construction of (minimal) logarithmic signatures yields uniform (cyclic) group factorizations (e.g., the construction method of double coset decomposition~\cite{Holmes04}, \cite{Lempken05}).
We note that some of them are indeed useful for constructing uniform (cyclic) group factorizations. 
For example, \cite[Theorem 3.1]{Lempken05} gives a minimal logarithmic signature of $PSL(n,q)$ ($n \geq 2,\,  \mathrm{gcd}(n,q-1) = 1$) which yields a uniform group factorization of it.

The latter viewpoint allows for other construction methods on uniform cyclic group factorizations which are not applicable to (minimal) logarithmic signatures. 
In particular, since $G \equiv H_1H_2$ and $G = H_1H_2$ are equivalent when the length of factorization is two, a uniform group factorization of $G$ is immediately obtained from $G = H_1H_2$ (see Section \ref{ss:two}). 

\section{Examples}\label{s:construction}

\subsection{Sylow systems}\label{ss:Sylow}

Let $G$ be a finite group, and $\pi(G) = \{p_1, p_2, \ldots, p_{\ell}\}$ the prime factors of $|G|$.
For any $p_i\in \pi(G)$, we take a Sylow subgroup $P_{p_i}$ of $G$.
Then the ordered tuple $(P_{p_1}, P_{p_2}, \ldots, P_{p_{\ell}})$ is called a \textit{Sylow system} of $G$ if $(P_{p_{\sigma(1)}}, P_{p_{\sigma(2)}}, \ldots, P_{p_{\sigma(\ell)}})$ is a uniform group factorization with multiplicity $1$ for any permutation $\sigma$.
It is well-known that any finite solvable group has a Sylow system  \cite[Subsection 6.4, Theorem 4.3]{Gorenstein07}.

There are the following three cases:
\begin{enumerate}
\renewcommand{\labelenumi}{(\Roman{enumi})}
\item $G$ has a Sylow system.
\item $G$ does not have a Sylow system, but has a uniform group factorization with multiplicity $1$ consisting of Sylow subgroups.
\item $G$ has neither a Sylow system nor a uniform group factorization with multiplicity $1$ consisting of Sylow subgroups.
\end{enumerate}

Some researchers have studied which finite groups belong to which type; for example, see Table \ref{table:sylow}.
Since groups classified as (I) and (II) have uniform group factorizations, it is an important research question to make it clear which non-solvable groups belong to Types (I) or (II).

\begin{table}[htb]
  \caption {Sylow systems and uniform group factorizations of finite groups}
  \label {table:sylow}
\centering
    \begin{tabular}{c|c}\hline
    Types & Finite groups \\ \hline \hline 
    (I) & Solvable groups \cite{Gorenstein07}, $L_3(2)$ \cite{Mil13}  \\ \hline
    \multirow{2}*{(II)} & $A_5$ \cite{Mil13}, $PGL(2,q)$, $PSL(2,q)$ ($q \not\equiv 1 \bmod 3$), $A_7, A_8$ \cite{Holt93} \\ 
         & $\mathrm{PSL}_3(4), \mathrm{PSU}_4(2), \mathrm{PSU}_3(4)$ \cite{GonzalezVasco03} \\ \hline
    (III) &  $U_3(3)$ \cite{Holt93} \\ \hline
    \end{tabular}
\end{table}

\subsection{Alternating groups}\label{ss:max}

Let $\{G_n\}_{n\in\mathbb{Z}}$ be a family of finite groups such that $G_n$ acts on a set $X_n$.
Then, $\{G_n\}_{n\in\mathbb{Z}}$ is a \textit{stabilizer chain} of $\{X_n\}_{n\in\mathbb{Z}}$ if, for any $n\in\mathbb{Z}$, there exists $x_n\in X_n$ such that the stabilizer $\mathsf{Stab}_{G_n}(x_n)$ is isomorphic to $G_{n-1}$.
Gonz{\'a}lez Vasco, R\"{o}tteler, and Steinwandt constructed a uniform group factorization of each Mathieu group with multiplicity $1$ based on stabilizer chains; for details, see \cite{GonzalezVasco03}.
Their method can be also extended to the case of some other groups.
In this subsection, as an example, we give a uniform cyclic group factorization of the alternating group $A_n$ $(n\geq 3)$.
First, we show the following useful lemma.

\begin{lemma}\label{lem:subgroups}
Let $G$ be a finite group, and $H, K_1, K_2, \ldots, K_{\ell}$ $(\ell\geq 1)$ non-trivial subgroups of $G$.
Assume that the following conditions (a) and (b) are satisfied.
\begin{enumerate}
\item[(a)] $|G| = |H|\cdot \displaystyle{\prod_{i=1}^{\ell}|K_i|}$.
\item[(b)] For any $g \in G$, there exists $(k_1,k_2,\ldots, k_\ell) \in K_1\times K_2\times\cdots\times K_{\ell}$ such that $k_{1} k_{2} \cdots k_{\ell}g \in H$.
\end{enumerate}
Then, $(K_{\ell}, K_{\ell-1}, \ldots, K_{1}, H)$ is a proper uniform group factorization of $G$ with multiplicity $1$.
\end{lemma}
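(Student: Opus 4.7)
The plan is to translate the hypotheses directly into statements about the multiplication map $\mult_{\mathcal{H}}$ associated with $\mathcal{H} := (K_\ell, K_{\ell-1}, \ldots, K_1, H)$, and conclude multiplicity one by a cardinality argument.

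First I would rewrite condition (b) in the form needed for the stated ordering. Given $g \in G$, choose $(k_1, \ldots, k_\ell) \in K_1 \times \cdots \times K_\ell$ with $h := k_1 k_2 \cdots k_\ell\, g \in H$. Inverting gives
\[
g = (k_1 k_2 \cdots k_\ell)^{-1} h = k_\ell^{-1} k_{\ell-1}^{-1} \cdots k_1^{-1} h.
\]
Setting $k'_i := k_i^{-1} \in K_i$ (using that each $K_i$ is a subgroup), we have exhibited an element $(k'_\ell, k'_{\ell-1}, \ldots, k'_1, h) \in K_\ell \times K_{\ell-1} \times \cdots \times K_1 \times H$ mapping to $g$ under $\mult_{\mathcal{H}}$. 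Hence $\mult_{\mathcal{H}}$ is surjective, so $\mathcal{H}$ is indeed a (group) factorization of $G$.

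Next, I would invoke condition (a): the cardinality of the domain of $\mult_{\mathcal{H}}$ is $|H| \cdot \prod_{i=1}^{\ell} |K_i| = |G|$, the cardinality of the codomain. A surjection between finite sets of equal cardinality is a bijection, so $|\mult_{\mathcal{H}}^{-1}(g)| = 1$ for every $g \in G$. This is precisely the statement that $\mathcal{H}$ is a uniform group factorization with multiplicity $1$.

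Finally, properness follows immediately from the non-triviality assumption: if any one of $H, K_1, \ldots, K_\ell$ were equal to $G$, then by (a) the product of the remaining cardinalities would be $1$, forcing one of them to be trivial, contradicting the hypothesis. I do not anticipate a real obstacle here; the only point requiring care is the reversal of order when inverting, which is exactly why the factorization is stated with the $K_i$'s in reversed order $K_\ell, K_{\ell-1}, \ldots, K_1$.
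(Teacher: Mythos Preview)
Your proof is correct and follows essentially the same approach as the paper: both use condition~(b) to rewrite $g$ as $k_\ell^{-1}\cdots k_1^{-1}h$ (giving surjectivity of $\mult_{\mathcal{H}}$) and then invoke condition~(a) as a cardinality count to force uniqueness. Your version is in fact slightly more explicit than the paper's, which simply asserts properness without spelling out the argument you give in your final paragraph.
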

\begin{proof}
Let $g\in G$. By condition (b), there exists $(k_1,k_2,\ldots, k_\ell) \in K_1\times K_2\times\cdots\times K_{\ell}$ such that $h:=k_{1} k_{2} \cdots k_{\ell}g \in H$.
Then, we have $g = k_{\ell}^{-1}k_{\ell-1}^{-1}\cdots k_1^{-1}h\in K_{\ell}K_{\ell-1}\cdots K_{1}H$.
By condition (a), such expression is unique.
Therefore,  $(K_{\ell}, K_{\ell-1}, \ldots, K_{1}, H)$ is a proper uniform group factorization of $G$ with multiplicity $1$.
\end{proof}

Now, we construct a uniform cyclic group factorization of the alternating group $A_n$.

\begin{proposition}
For any integer $n\geq 3$, the alternating group $A_n$ admits a uniform cyclic group factorization with multiplicity $1$.
\end{proposition}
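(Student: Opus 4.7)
The plan is to proceed by induction on $n$, using Lemma \ref{lem:subgroups} with $H := A_{n-1} = \mathsf{Stab}_{A_n}(n)$ to peel off one level of the stabilizer chain, and then Lemma \ref{lem:arrange} to splice the resulting pieces together with a uniform cyclic group factorization of $A_{n-1}$ supplied by the inductive hypothesis. The base case $n = 3$ is trivial, as $A_3 = \langle (1,2,3) \rangle$ is itself cyclic. In the inductive step, condition (a) of Lemma \ref{lem:subgroups} forces $\prod_i |K_i| = [A_n : A_{n-1}] = n$, while condition (b) demands that the set of products $k_1 k_2 \cdots k_\ell$ contain, for each $j \in \{1, \ldots, n\}$, an element sending $j$ to $n$.

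For $n$ odd this is immediate: the $n$-cycle $(1, 2, \ldots, n)$ lies in $A_n$, generates a cyclic subgroup $K$ of order $n$, and acts transitively on $\{1, \ldots, n\}$, so the single factor $K_1 := K$ fulfils both conditions.

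For $n = 2m$ even the $n$-cycle is odd and unavailable, which is the main obstacle. My workaround is to use two cyclic factors: set $K_2 := \langle c \rangle$ with $c := (1, 2, \ldots, m)(m+1, \ldots, n)$, a product of two disjoint $m$-cycles (hence $c \in A_n$ with $|c| = m$), and $K_1 := \langle d \rangle$ with $d$ a double transposition in $A_n$ sending $n$ to $m$, e.g., $d := (n, m)(1, 2)$ for $m \geq 3$ and a disjoint alternative when $m = 2$. The index count $|K_1|\cdot|K_2| = 2m = n$ gives condition (a). For condition (b), the requirement $(d^a c^i)(j) = n$ rewrites as $c^i(j) = d^a(n) \in \{n, m\}$; since the $c$-orbit of $n$ is $\{m+1, \ldots, n\}$ and the $c$-orbit of $m$ is $\{1, \ldots, m\}$, and these two orbits partition $\{1, \ldots, n\}$, exactly one value of $a \in \{0, 1\}$ meets the constraint for each $j$ (with $i$ then forced within $\{0, \ldots, m-1\}$). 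Lemma \ref{lem:subgroups} then delivers a proper uniform group factorization $(\langle c \rangle, \langle d \rangle, A_{n-1})$ of $A_n$ with multiplicity one, and Lemma \ref{lem:arrange} combined with the inductive cyclic factorization of $A_{n-1}$ promotes this to a uniform cyclic group factorization of $A_n$, still of multiplicity one.

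The main obstacle is precisely the even case: $A_n$ contains no transitive cyclic subgroup when $n$ is even, so no single cyclic factor can traverse $A_n / A_{n-1}$. The remedy above — split the $n$ cosets into two $c$-orbits of size $m$ and use the involution $d$ to hop between them — is a small instance of the stabilizer-chain approach used in \cite{GonzalezVasco03}.
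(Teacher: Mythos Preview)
Your proof is correct and follows essentially the same approach as the paper: induction on $n$ via the stabilizer chain $A_{n-1} = \mathsf{Stab}_{A_n}(n)$, with a single cyclic factor $\langle (1,2,\ldots,n)\rangle$ for odd $n$ and the pair $\langle (1,\ldots,m)(m+1,\ldots,2m)\rangle$, $\langle (1,2)(m,2m)\rangle$ for $n = 2m$, combined with Lemma~\ref{lem:subgroups} and Lemma~\ref{lem:arrange}. Your explicit treatment of the edge case $m=2$ (where $(1,2)$ and $(m,2m)$ cease to be disjoint and one must substitute a genuinely disjoint double transposition such as $(2,4)(1,3)$) is a small refinement over the paper's write-up.
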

\begin{proof}
We show the statement by induction on $n$.

If $n=3$, the assertion follows obviously since $A_3$ is cyclic (of order three).

Suppose that $n>3$.
We consider the natural action of $A_n$ on $\{1,2,\ldots, n\}$.
Take $H = \mathsf{Stab}_G(n)\simeq A_{n-1}$, which acts on $\{1,2,\ldots, n-1\}$ naturally.

First, we suppose that $n$ is odd.
We consider the subgroup $K$ of $A_n$ generated by $(1,2,3,\ldots,n)\in A_n$.
Then, for any $\rho\in A_n$, we observe that $(1,2,3,\ldots,n)^{n-\rho(n)}\rho$ fixes the point $n$, that is, 
\[ (1,2,3,\ldots,n)^{n-\rho(n)}\rho\in H. \]
Thus, it follows from Lemma \ref{lem:subgroups} that $A_n \overset{1}{\equiv} KH$ is a uniform group factorization.
By induction hypothesis, $H$ has a uniform cyclic group factorization, and so does $A_n$ by Lemma \ref{lem:arrange}.

Now, we suppose that $n=2m$ for some positive integer $m$.
We put 
\[\sigma_1 := (1,2,\ldots , m)(m+1,m+2,\ldots ,2m),\quad  \sigma_2 := (1,2)(m,2m). \]
Then, it is easy to check that $\sigma_1$ and $\sigma_2$ belong to $A_n$.
Let $K_1 = \langle \sigma _1 \rangle$ and $K_2 = \langle \sigma_2 \rangle$.
Let $\rho \in A_n$.
If $1 \leq \rho(n) \leq m$, then $\sigma_2\sigma_1^{m-\rho(n)}\rho$ fixes the point $n$.
Otherwise, $e\sigma_1^{2m-\rho(n)}\rho$ fixes the point $n$, where $e$ is the identity element of $K_2$.
Thus, $A_n \overset{1}{\equiv} K_1K_2H$ is a uniform group factorization of $A_n$ by Lemma \ref{lem:subgroups}.
By induction hypothesis, $H$ has a uniform cyclic group factorization, and so does $A_n$ by Lemma \ref{lem:arrange}.
\end{proof}

\begin{remark}\label{rem:Magliveras}
A construction of a uniform group factorization of $A_n$ can be found in \cite{Magliveras02}. However, the construction needs to be corrected.
Indeed, if $n=2m+1$ for some odd integer $m\geq 1$, then the group 
\[ \langle (1,2,\ldots, m)(m+1,m+2,\ldots ,2m), \ (1,m+1)(2,m+2)\cdots(m,2m) \rangle  \]
appeared in the construction by \cite{Magliveras02} is not a subgroup of $A_n$.
\end{remark}

\subsection{Group factorizations of length two}\label{ss:two}

Let $G$ be a finite group, and  $H_1,H_2,\ldots,H_k$ subgroups of $G$.
In general, $G = H_1\cdots H_k$ does not imply $G \equiv H_1\cdots H_k$.
However, when $k=2$, $G=H_1H_2$ implies $G \equiv H_1H_2$.
More precisely, the following lemma can be seen in \cite{Mil13}.

\begin{lemma}\label{lemma:two}
Let $G$ be a finite group, and $H_1$ $H_2$ subgroups of $G$.
If $G = H_1H_2$, then $\mathcal{H}=(H_1,H_2)$ is a uniform group factorization of $G$ with multiplicity $|H_1 \cap H_2|$.
\end{lemma}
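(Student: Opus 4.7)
The plan is to show that for each $g\in G$ the fiber $\mathrm{mult}_{\mathcal{H}}^{-1}(g)$ is in bijection with $H_1\cap H_2$, which immediately yields both that $\mathcal{H}$ is a uniform factorization and that its multiplicity equals $|H_1\cap H_2|$.

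Fix $g\in G$. Since $G=H_1H_2$, the fiber $\mathrm{mult}_{\mathcal{H}}^{-1}(g)$ is nonempty, so we can choose once and for all a pair $(a_1,a_2)\in H_1\times H_2$ with $a_1a_2=g$. I would then define a map
\[
\Phi\colon H_1\cap H_2\ \longrightarrow\ \mathrm{mult}_{\mathcal{H}}^{-1}(g),\qquad \Phi(x):=(a_1x,\,x^{-1}a_2).
\]
Well-definedness is immediate: $a_1x\in H_1$ because $a_1\in H_1$ and $x\in H_1\cap H_2\subseteq H_1$; similarly $x^{-1}a_2\in H_2$; and the product telescopes to $(a_1x)(x^{-1}a_2)=a_1a_2=g$.

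Next I would verify that $\Phi$ is a bijection. Injectivity is clear from the first coordinate: if $\Phi(x)=\Phi(y)$ then $a_1x=a_1y$, hence $x=y$. For surjectivity, suppose $(h_1,h_2)\in\mathrm{mult}_{\mathcal{H}}^{-1}(g)$, so $h_1h_2=g=a_1a_2$. Setting $x:=a_1^{-1}h_1$, we have $x\in H_1$ since $a_1,h_1\in H_1$; and from $a_1^{-1}h_1=a_2h_2^{-1}$ we also have $x\in H_2$. Thus $x\in H_1\cap H_2$, and $\Phi(x)=(a_1x,x^{-1}a_2)=(h_1,a_1^{-1}h_1\cdot h_1^{-1}\cdot g)=(h_1,h_2)$.

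Since $\Phi$ is a bijection, $|\mathrm{mult}_{\mathcal{H}}^{-1}(g)|=|H_1\cap H_2|$, a value that does not depend on $g$, which concludes the proof. There is no real obstacle here; the only point requiring slight care is verifying the identity $a_1^{-1}h_1=a_2h_2^{-1}$ to see that the preimage element $x$ indeed lies in $H_2$ (and not merely in $H_1$), so that $\Phi$ is surjective. A cleaner sanity check comes from summing: since the total number of pairs in $H_1\times H_2$ is $|H_1|\,|H_2|$ and this sums to $|G|\cdot|H_1\cap H_2|$, the well-known identity $|H_1H_2|=|H_1|\,|H_2|/|H_1\cap H_2|$ confirms our multiplicity count.
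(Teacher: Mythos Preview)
Your proof is correct and follows essentially the same approach as the paper: fix one decomposition $g=a_1a_2$ and observe that the fiber is exactly $\{(a_1x,x^{-1}a_2)\mid x\in H_1\cap H_2\}$, which the paper states directly while you spell out injectivity and surjectivity. One small slip: in the surjectivity check the intermediate expression for the second coordinate should read $x^{-1}a_2=h_1^{-1}a_1\cdot a_2=h_1^{-1}g=h_2$, not $a_1^{-1}h_1\cdot h_1^{-1}\cdot g$ (which equals $a_2$), though your final conclusion $(h_1,h_2)$ is correct.
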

\begin{proof}
For any $g\in G$, we write $g=h_1h_2$ for some $h_1\in H_1$ and $h_2\in H_2$.
We then have 
\[ \mult_{\mathcal{H}}^{-1}(g) = \{(h_1y, y^{-1}h_2) \mid y \in H_1\cap H_2\}. \]
This implies the assertion immediately.
\end{proof}

By Lemma \ref{lemma:two}, if $(H_1,H_2)$ is a uniform group factorization of a finite group $G$, and $H_1'$ (resp. $H_2'$) is a maximal subgroup containing $H_1$ (resp. $H_2$), then $(H_1',H_2')$ is also a uniform group factorization.
Thus, we may assume that $H_1$ and $H_2$ are maximal without loss of generality. 

Liebeck, Praeger, and Saxl \cite{LPS90} showed that sporadic simple groups $M_{11}, M_{12}, M_{23}, M_{24}$, $J_2$,  $HS, He, Ru$,  $Suz, Co_1, F_{22}$ have maximal group factorizations of length two (which are in fact uniform group factorizations from Lemma \ref{lemma:two}), and the other sporadic simple groups do not have such factorizations (see also \cite{Giu06}). 
By Lemma \ref{lemma:two}, these sporadic simple groups have a uniform cyclic group factorization with multiplicity greater than $1$. A natural question is whether it is essential that the multiplicity of these cyclic group factorizations be greater than 1. We left as an open problem to find (or prove the inexistence of) a group $G$ such that $G$ has a uniform cyclic group factorization with multiplicity greater than $1$, but does not have one with multiplicity $1$.

\end{document}